\documentclass[12pt]{article}

\usepackage[margin=2.5cm]{geometry}
\usepackage{amsfonts}
\usepackage{amssymb}
\usepackage{amsmath}
\usepackage{color}
\usepackage{fancyhdr}
\usepackage{amsthm}
\usepackage{hyperref}
\usepackage{verbatim}
\usepackage{tikz}
\usepackage{comment}

\definecolor{light-gray}{gray}{0.7}
\definecolor{brown}{rgb}{0.59, 0.29, 0.0}

\theoremstyle{plain}
\newtheorem{theorem}{Theorem}

\theoremstyle{definition}
\newtheorem{definition}{Definition}

\newcommand{\seqnum}[1]{\href{http://oeis.org/#1}{\underline{#1}}}
\newcommand{\figmove}{1}
\newcommand{\figarcs}{2}
\newcommand{\figlong}{3}
\newcommand{\figshort}{4}
\newcommand{\figlabels}{5}
\newcommand{\figassociated}{6}
\newcommand{\figposet}{7}
\newcommand{\coveredby}{<\!\!\!\!\cdot}

\title{Brussels Sprouts, Noncrossing Trees, and Parking Functions}
\author{Caleb Ji and James Propp}
\date{}

\begin{document}

\maketitle

\noindent
{\sc Abstract:}
We consider a variant of the game of Brussels Sprouts that,
like Conway's original version, ends in a predetermined number of moves.
We show that the endstates of the game are in natural bijection 
with noncrossing trees and that the game histories are in natural bijection 
with both parking functions and factorizations of a cycle of $S_n$.

\section{Introduction}
Since its creation in the 1960s, 
the two-player topological game Sprouts
introduced by Conway and Paterson~\cite{G}
has been studied for its interesting mathematical properties,
and many basic questions about it remain unresolved.
In contrast, the superficially similar topological game 
{\it Brussels Sprouts} introduced by Conway 
is of no game-theoretic interest at all,
since the identity of the winner is predetermined.
Nonetheless, games that are trivial from the point of view of strategy
may still pose interesting questions for enumerative combinatorics. 
(``If you can't beat 'em, count how many ways they can beat you.'')

In Brussels Sprouts, $n$ crosses are drawn in the plane, 
each with four free ends, or {\it arms}.
To make a move, a player connects two arms with a simple curve 
that does not intersect any of the previously-drawn curves, 
and marks a notch on the curve that provides two further arms,
one on each side of the curve; see page 569 of~\cite{BCG2}.
The game ends when no further moves can be played.  
By interpreting the game as a planar graph and using Euler's formula,
one can show that the game will last precisely $5n-2$ moves.

Variants of Brussels Sprouts have been proposed.
According to \cite{G}, Eric Gans proposed replacing crosses by ``stars''
consisting of $n$ crossbars (that is, stars with $2n$ arms),
and named this new game Belgian Sprouts.
It was quickly realized that there is no need 
to require stars to have an even number of arms;
in the game Cloves (also called Stars-and-Stripes),
stars may have any number of arms.
Here we consider a version of Brussels Sprouts
we call {\it Planted Brussels Sprouts}.  
The initial state consist of a circle with $n$ marked points;
attached to each marked point is an arm interior to the circle,
as in the left side of Figure~\figmove~(the case $n=4$).
To make a move, a player connects two arms with a simple curve
that does not intersect any of the previously-drawn curves,
which we will call an {\it arc},
and marks a notch on the arc that provides two further arms,
one on each side of the arc; see the right side of Figure~\figmove.
(Here, as in some of the other figures,
we leave a slight gap between the arms and the arcs;
when positions become more complicated,
this convention makes it easier to see what has happened.)
The game ends when no more moves are possible;
this always happens after exactly $n-1$ moves 
(see Section 2 for the simple inductive proof).
The game is equivalent to a game of Cloves
in which the initial position is a single $n$-armed star,
though the equivalence involves some topological subtleties
that we discuss in section 5.

We will show that for Planted Brussels Sprouts on $n$ labeled vertices,
the number of topologically distinct endstates is 
${3n-3 \choose n-1}/(2n-1)$ (see entry A001764 in the OEIS),
while the number of topologically distinct lines of game-play is
$n^{n-2}$ (see entry A000272 in the OEIS).
For instance, for $n=3$ there are three topologically distinct endstates
and three topologically distinct lines of play,
while for $n=4$ there are twelve topologically distinct endstates
and sixteen topologically distinct lines of play.
We prove the two formulas by showing that
the endstates are in bijection with noncrossing spanning trees
while lines of play are in bijection with parking functions.

\begin{figure}
\begin{center}
\includegraphics[width=.9\textwidth]{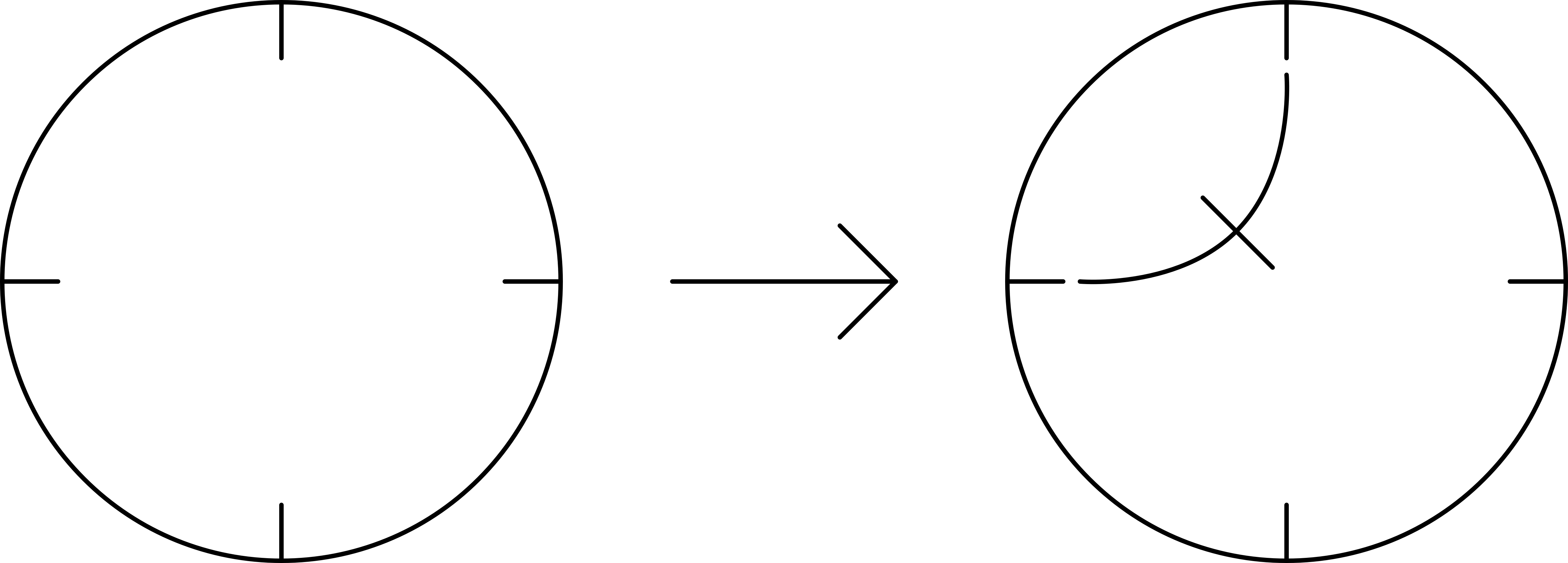}
\end{center}
\caption{A move in Planted Brussels Sprouts.}
\end{figure}

\section{Number of moves}

An elementary induction can be used to show
that the game will last precisely $n-1$ moves.
The claim is trivially true for $n=1$.
For larger $n$, assume that the claim is true for all $n' < n$,
and note that if the first move connects an arm
with another arm $m$ positions away
(we call $m$ the {\it length} of the move),
the game is split into two subgames that are equivalent 
to Planted Brussels Sprouts with $m$ vertices
and Planted Brussels Sprouts with $n-m$ vertices.
Since by the induction hypothesis these subgames
last $m-1$ and $n-m-1$ moves respectively,
the original game lasts $1 + (m-1) + (n-m-1) = n-1$ moves, as claimed.

Since the game lasts $n-1$ moves, $n-1$ arcs are drawn.
These need not correspond to what one might naively call arcs
in the final position of the game. 
For instance, the left panel of Figure~\figarcs, which shows an end position 
for a game of Planted Brussels Sprouts with $n=4$,
contains three arcs, not two or four.
The right panel makes this point more clearly:
the three arcs join $a$ to $b$, $c$ to $d$, and $e$ to $f$ respectively.
Thus the horizontal diameter of the disk
is not itself an arc but does contain two arcs,
and the pairs of arms emanating from it do not count as arcs at all.

It is worth mentioning that the endstate of a game 
does not usually determine the entire history of the game.
For instance, in Figure~\figarcs,
we cannot tell which of arcs $(c,d)$ and $(e,f)$ was drawn first.

\begin{figure}
\begin{center}
\includegraphics[width=.8\textwidth]{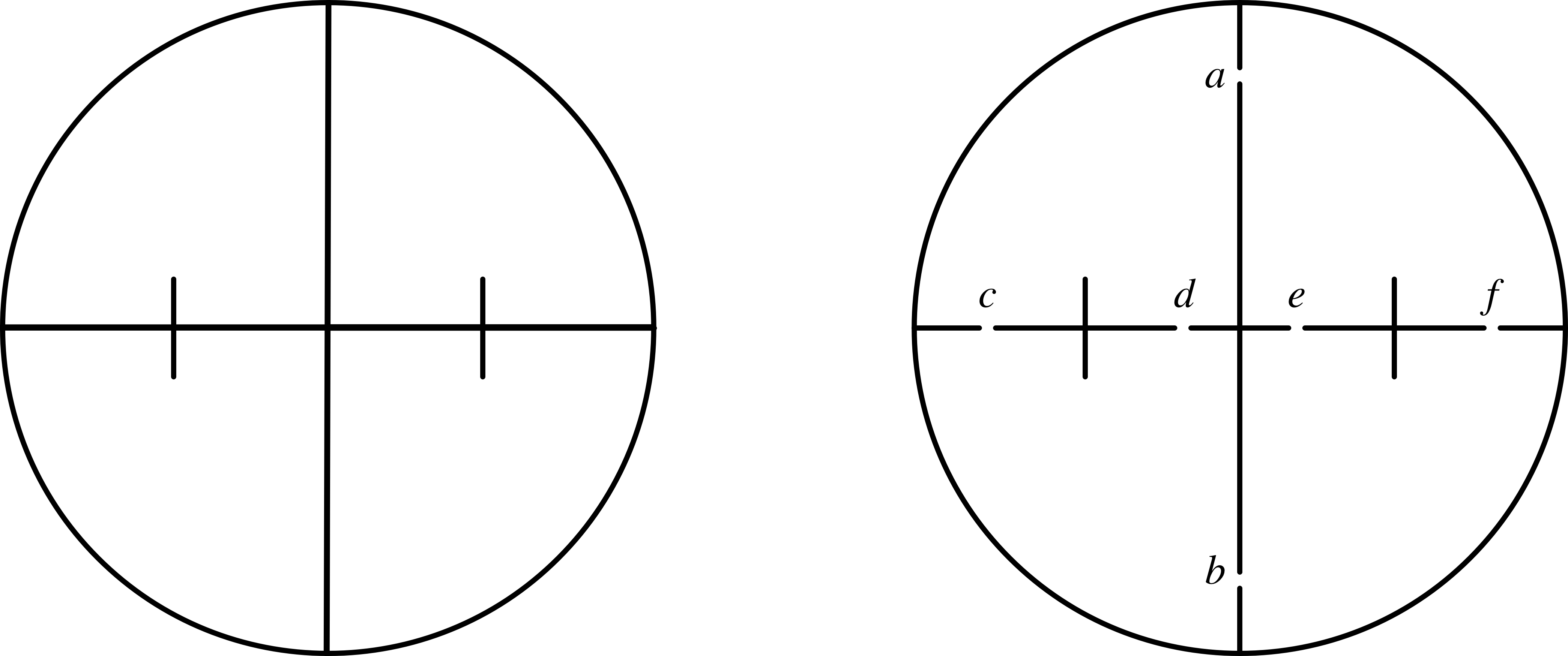}
\end{center}
\caption{An end position and its three arcs $(a,b)$, $(c,d)$, and $(e,f)$.}
\end{figure}

The observation that each state of the game naturally splits up
into one or more smaller subgames,
and that each move separates an existing subgame into two smaller subgames,
will be crucial to studying Planted Brussels Sprouts.
For convenience, we will henceforth assume that
the original arms are labeled $1$ through $n$ in clockwise order.
Arms created later are labeled in an inductive fashion;
if at some stage an arc is drawn connecting
two arms labeled $\alpha$ and $\beta$,
then the two new arms are labeled $(\alpha,\beta)$ and $(\beta,\alpha)$,
as illustrated in Figure~\figlong.
If $P$ (not labeled in the Figure) denotes the point
on the old arc from which the new arms spring,
then surrounding $P$ ones finds, in clockwise order,
the old arm labeled $\alpha$, the new arm labeled $(\alpha,\beta)$,
the old arm labeled $\beta$, and the new arm labeled $(\beta,\alpha)$.
We call these labels the {\it long labels} of the arms.
The utility of these labels stems from the fact that
they contain all topologically relevant information.

It will also be convenient to assign arms {\it short labels},
which are always numbers between 1 and $n$.
When an arc is drawn connecting two arms labeled $i$ and $j$,
the two new arms are labeled $i$ and $j$, as illustrated in Figure~\figshort.
If $P$ again denotes the point on the old arc from which the new arms spring,
then surrounding $P$ ones finds, in clockwise order,
the old arm $i$, the new arm $i$, the old arm $j$, and the new arm $j$.

It is easy to recover the short label of an arm from its long label;
simply take the first component of the pair,
and then take the first component of that pair,
and so on, until one arrives at something
that is a number rather than an ordered pair;
this is the desired short label of the arm.
For instance, an arm with long label $((4,3),(2,(1,5)))$
would have 4 as its short label.

It is also easy to determine the short labels of all the arms
directly from a game state (not necessarily an endstate)
without following an iterative process
that tracks the moves that led to that state.
The arcs that have been drawn at any stage
divide the disk into smaller simply-connected sets (hereafter {\it regions}),
each of which contains a boundary segment of the original disk.
To find the short label of a particular arm $a$
that lies in a particular region,
draw a simple closed clockwise loop that traverses the interior of the region,
starting from $a$ and hugging the boundary as it goes.
Suppose that when the loop first encounters the boundary of the original disk,
the vertex it encounters is the one originally indexed as $i$.
Then $i$ is the short label of $a$. See Figure~\figlabels.

\begin{figure}
\begin{center}
\includegraphics[width=.9\textwidth]{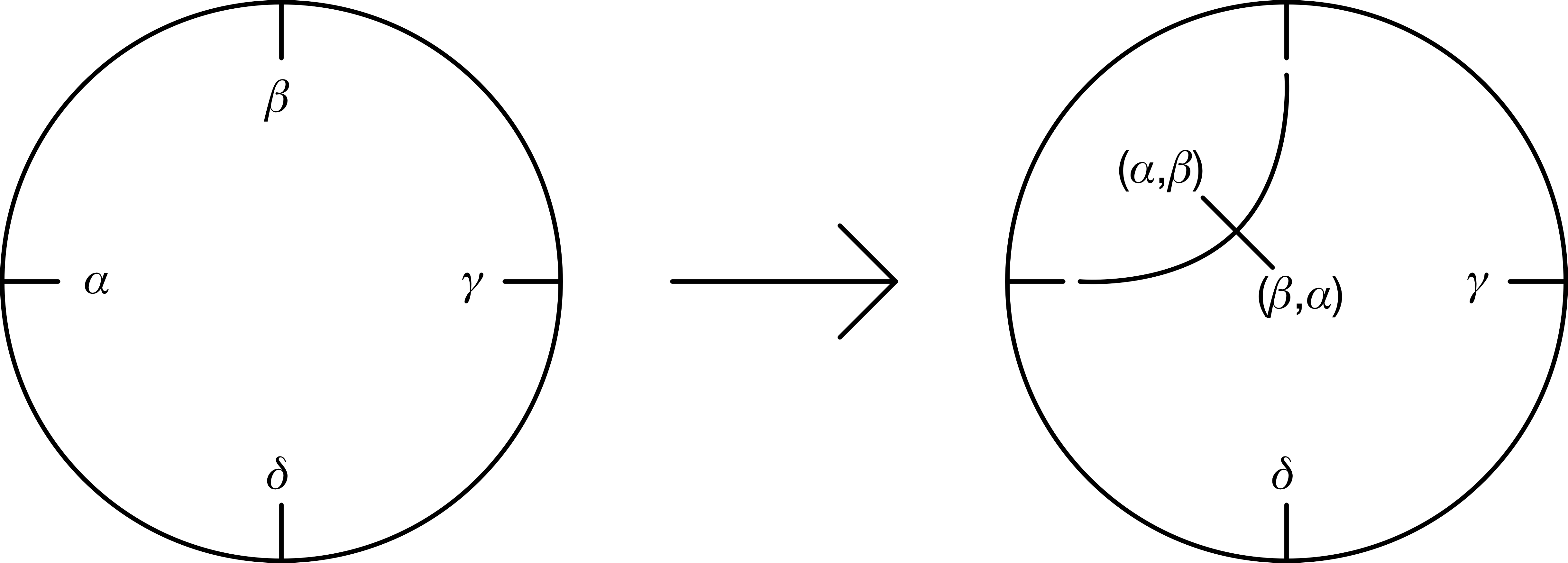}
\end{center}
\caption{Long labels of arms, constructed iteratively.}
\end{figure}

\begin{figure}
\begin{center}
\includegraphics[width=.9\textwidth]{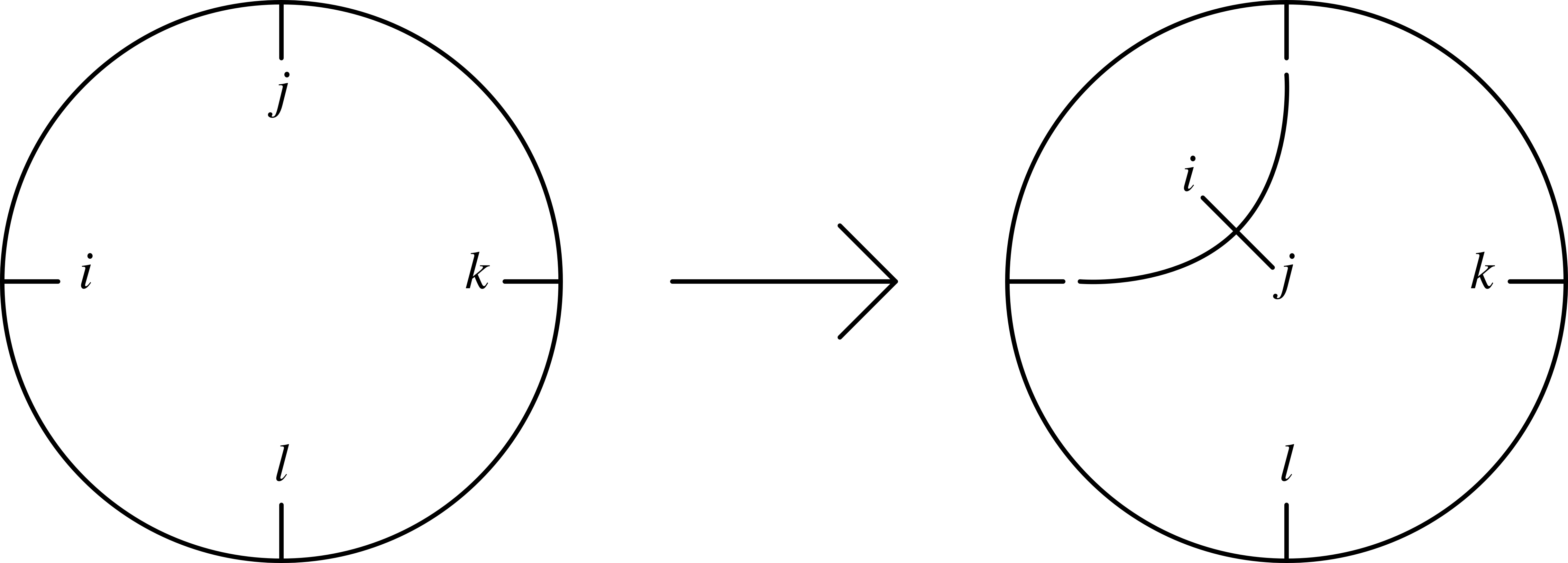}
\end{center}
\caption{Short labels of arms, constructed iteratively.}
\end{figure}

\section{Number of endstates}
\label{end_states}
After $n-1$ moves have been made 
in a game of Planted Brussels Sprouts of order $n$,
an endstate is reached.  
Two endstates are considered topologically equivalent if and only if
there is a homeomorphism of the disk to itself
that carries one picture to the other
and preserves the long labels of the arms.
Note that the long label of an arm contains (as its two components)
the long labels of the two arms that were joined when 
that arm was first created, and so on recursively; 
thus each long label carries its own ``ancestry'' within itself.

Two lines of game-play are considered topologically equivalent if and only 
if for all $1 \leq k \leq n-1$, the state of the first line of play after $k$ moves
is topologically equivalent to the the state of the second line of play after $k$ moves.

Two Planted Brussels Sprouts Games (hereafter {\it PBSG}s)
are topologically equivalent iff their respective moves 
(that is, the arms that are added, listed in chronological order)
have the same long labels.
Also, two Planted Brussels Sprouts Endstates (hereafter {\it PBSE}s)
are topologically equivalent iff they have the same set of long labels.

Each arc that is formed by joining two arms 
during the course of play can be assigned a label,
namely the set whose two elements are
the short labels of the arms being joined
(or, equivalently, the short labels of the
two new arms that get created when that arc is drawn).
For instance, in Figure~\figshort,
the arc that gets added has label $\{i,j\}$;
in the next move of the game, one of three arcs will be added, 
with label $\{j,k\}$, $\{j,l\}$, or $\{k,l\}$.
The labels of the $n-1$ arcs that are drawn in the course of a game
can be thought of as edges
in the complete graph with vertex set $\{1,2,\dots,n\}$.
We picture the $n$ vertices as being arranged
on a circle, labeled 1 through $n$ in clockwise order.
In this way, each state of the game corresponds to a graph $G$.
This graph contains no multiple edges because
when arms labeled $i$ and $j$ are joined,
the new arms that are created that bear the labels $i$ and $j$
will lie on opposite sides of the new arc,
as will any subsequent arms labeled $i$ and $j$,
so that it will never again be possible
to draw an arc labeled $\{i,j\}$.
Furthermore:

\begin{theorem}
\label{NCT}  
The topologically distinct endstates of the game that starts with $n$ labeled arms 
are in bijective correspondence with 
the topologically distinct noncrossing trees on $n$ labeled vertices.
\end{theorem}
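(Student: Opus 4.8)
The plan is to prove the theorem by showing that the map $\Phi$ sending an endstate to its arc-label graph $G$ — the graph on vertex set $\{1,\dots,n\}$ whose $n-1$ edges are the arc-labels — is a bijection onto the noncrossing trees. The whole argument runs by induction on $n$, driven by the decomposition of Section 2: the first move, joining arms $a$ and $b$ with $a<b$, severs the disk into two independent Planted Brussels Sprouts games. My first task is to pin down the labeling bookkeeping that Section 2 leaves implicit. Tracing the hugging-loop definition of a short label through the notch, using the clockwise cyclic order (old-$a$, new-$a$, old-$b$, new-$b$), I would verify that the new arm on the $a$-side hugs back to the original vertex $a$ and that its subgame occupies the cyclic arc $A=\{a,a+1,\dots,b-1\}$, while the new arm on the $b$-side receives short label $b$ and its subgame occupies the complementary arc $B=\{b,b+1,\dots,a-1\}$. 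The upshot is that each subgame is a genuine PBSG on its arc, so that $G=\{\{a,b\}\}\cup G_A\cup G_B$, where $G_A,G_B$ are the arc-label graphs of the two subgames on $A$ and $B$.

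Granting this, the forward direction is immediate by induction. By the inductive hypothesis $G_A$ and $G_B$ are noncrossing trees on $A$ and $B$; because $A$ and $B$ are complementary arcs with $a\in A$ and $b\in B$, no chord interior to $A$ can cross one interior to $B$, and the chord $\{a,b\}$ (sharing an endpoint with each arc) crosses neither. Hence $G$ is noncrossing, and since $\{a,b\}$ bridges the two subtrees, $G$ is a spanning tree. Together with the already-established fact that the $n-1$ arcs carry distinct labels, this shows $\Phi$ maps endstates into noncrossing trees.

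For bijectivity I would reverse the recursion, and the essential combinatorial input is that every noncrossing tree $T$ admits a \emph{splitting edge}: an edge $\{a,b\}$ whose removal cuts $T$ into exactly the two arc-subtrees on $A$ and $B$. One checks this is equivalent to $b$ being the clockwise-last neighbor of $a$ and $a$ being the clockwise-last neighbor of $b$. I would prove existence cleanly by defining $f(v)$ to be the clockwise-last neighbor of $v$ and iterating $f$: any cycle $v_0\to v_1\to\dots\to v_{k-1}\to v_0$ of $f$ is a closed walk along the edges $\{v_i,v_{i+1}\}$ of $T$, so by acyclicity of $T$ it cannot have length $k\ge 3$ (that would embed a $k$-cycle in a tree), while $k=1$ is impossible since $f(v)\ne v$; as a finite functional graph always contains a cycle, there is a $2$-cycle $f(a)=b$, $f(b)=a$, which is a splitting edge. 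Making such an edge the first move and recursing on $T|_A$ and $T|_B$ realizes every $T$, giving surjectivity.

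Injectivity — equivalently, that the topological endstate (its set of long labels) is determined by the short-label tree $T$ — is where I expect the real work to lie, together with the topological bookkeeping of the first paragraph. The clean way is to show the reverse construction is forced: in any endstate with $\Phi(E)=T$, an arc carrying a splitting label $\{a,b\}$ must separate the regions exactly along $A$ and $B$, so that $E$ restricts to sub-endstates on $A$ and $B$ whose long labels are determined by $T|_A$ and $T|_B$ inductively, and reassembling recovers $E$ uniquely up to the allowed homeomorphisms. The main obstacle is therefore not the counting but the topology: rigorously justifying that short labels behave as claimed under the split, and that the splitting-edge decomposition of $T$ lifts to a decomposition of the endstate that is well-defined independently of which splitting edge is chosen. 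Once these topological facts are secured, $\Phi$ and its inverse are mutually inverse recursions, and the bijection with noncrossing trees follows, matching the count $\binom{3n-3}{n-1}/(2n-1)$.
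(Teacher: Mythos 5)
Your proposal is correct and follows essentially the same route as the paper: the subgame decomposition under the first arc, the identification of your ``splitting edges'' with the paper's primary arcs/edges, the existence argument via iterating the clockwise-last (equivalently, first counterclockwise) neighbor map until acyclicity forces a $2$-cycle, and the forced reverse recursion for injectivity all match the paper's proof. The only cosmetic difference is that you obtain noncrossingness from the inductive split into complementary arcs, whereas the paper gives a separate direct contradiction argument about a hypothetical pair of crossing edges.
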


\begin{figure}
\begin{center}
\includegraphics[width=.8\textwidth]{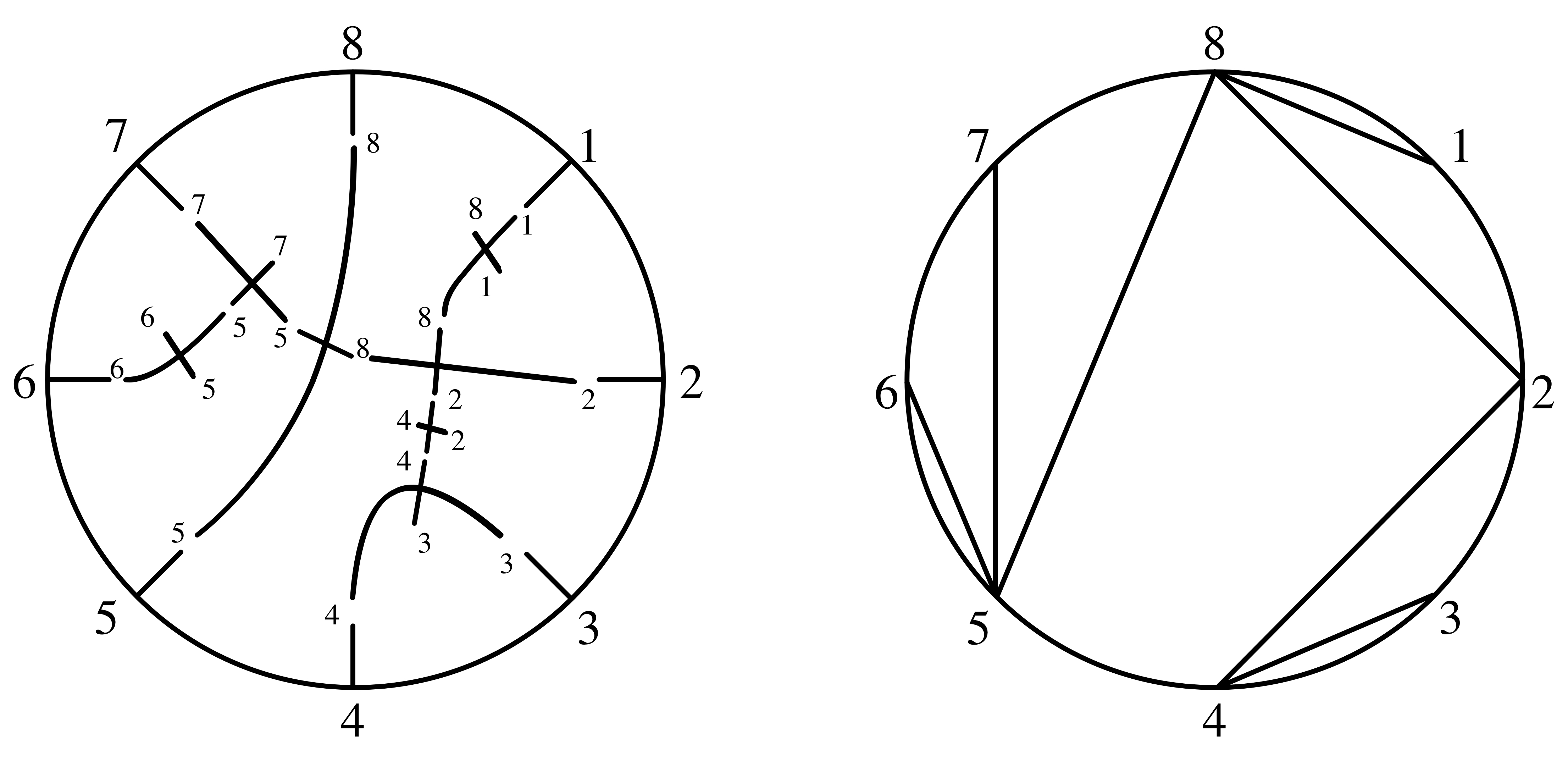}
\end{center}
\caption{A Planted Brussels Sprouts Endstate (left)
and the associated Noncrossing Spanning Tree (right).}
\end{figure}

Before we prove this theorem, 
we will first define a few terms and then give an intuitive picture
of the relationship between PBSEs and NSTs (noncrossing spanning trees).

\begin{definition}
A {\it noncrossing tree} on $n$ labeled vertices is a tree 
on $n$ vertices labeled $1, 2, \ldots, n$ with the property that
when the vertices are written in increasing clockwise order along a circle, 
no two distinct edges of the tree intersect in their interiors.  
\end{definition}

\begin{definition}
A {\it primary arc} is an arc that joins two arms on the original circle.
If an arc joining arms labeled $i$ and $j$ is primary,
then in every playing sequence of the game, 
that arc is drawn prior to all other arcs
involving an arm labeled $i$ or $j$.
On the other hand,
if an arc joining arms labeled $i$ and $j$ is not primary,
then in every playing sequence of the game, 
there will be an earlier-drawn arc
involving an arm labeled $i$ or $j$.
For instance, in the left panel of Figure~\figassociated,
the primary arcs are $\{5,8\}$ and $\{3,4\}$.
\end{definition}

\begin{definition}
In a noncrossing tree on $n$ vertices, 
an edge between vertices $i$ and $j$ where $i<j$ is a {\it primary edge}
iff vertex $i$ is not connected by an edge to any vertex 
in the set $\{j+1, j+2, \ldots, n, 1, 2, \ldots, i-1\}$ 
and vertex $j$ is not connected by an edge to any vertex 
in the set $\{i+1, i+2, \ldots, j-1\}$.
Equivalently, 
an edge $e$ is not primary if and only if one can pivot $e$, 
holding one endpoint fixed and moving the other endpoint clockwise around the circle,
so as to obtain another edge $e'$ in the tree.
For instance, in Figure~\figassociated,
the edge $\{2,4\}$ fails to be primary for two reasons:
it can be pivoted to both $\{2,8\}$ and $\{3,4\}$.
The primary edges in this noncrossing spanning tree 
are $\{5,8\}$ and $\{3,4\}$.
\end{definition}

The motivation for defining primary arcs and edges 
is the fact that they both split up 
the original game or the graph into two disjoint parts.  
The fact that they correspond to each other 
is the basis for the proof of Theorem~\ref{NCT}.

One way to picture the correspondence
is to imagine that the two arms that form a notch
are free to move independently of each other.
Pick some parameter $t$ in $[0,1]$.  When we draw an arc
joining an arm labeled $i$ (attached to a point $p$) 
to an arm labeled $j$ (attached to a point $q$),
forming a curve of length $L$ joining $p$ to $q$,
we take new points $p',q'$ along this curve
at respective distance $(t/2)L$, $(1-t/2)L$
from $p$ along the curve,
and we attach new arms labeled $i$ and $j$ to these points,
so that as we move from $p$ to $q$,
the new arm labeled $i$ is on the left
and the new arm labeled $j$ is on the right.
When $t=1$, we have a PBSE; when $t=0$, we have an NST.
What prevents this attractive picture from being a valid proof
is some hidden details about how this homotopy occurs,
given that the concept of ``arc'' is an inherently dynamical one
whose nature depends on the discrete-time-evolution of the state itself
over the course of the game (recall Figure~\figarcs).
The homotopy can be well-defined,
but we adopt a different approach.

\begin{proof}
First we make an observation: after $k$ arcs have been drawn in a game, 
the game is separated into $k+1$ subgames.  
The sets of arms of these subgames are nonempty and partition the original set of arms, 
and furthermore, these subgames are disjoint in that 
none of the $n-k-1$ remaining arcs to be drawn can connect arms from different subgames.  
These facts are easy to see through considering each arc: 
each arc connects two arms in the same subgame 
and separates that subgame into two disjoint subgames 
whose sets of arms form a partition of the set of arms of the original subgame.

Next we show by (strong) induction 
that the graphs obtained from an endstate are trees.
The base case $n=1$ is clear.
For $n>1$, note that the first arc (joining arms $i$ and $j$, say)
divides the game into two subgames,
and by the induction hypothesis each of these subgames corresponds to a tree,
one containing the vertex $i$ and the other containing the vertex $j$.
The first arc then corresponds to edge $\{i,j\}$,
whose inclusion turns the two-component forest into a tree.

Now we show that every tree obtained from an endstate 
is embedded in the plane as a noncrossing tree.  
Assume for sake of contradiction that there is a crossing of edges.  
Without loss of generality, say it comes from edges between 
vertices $i$ and $k$ and between $j$ and $l$, where $i<j<k<l$, 
and that the arc between $j$ and $l$ is drawn after the arc between $i$ and $k$.  
Consider the first arc that is drawn between 
an arm from the set $\{l+1, l+2, \ldots, n, 1, 2, \ldots, j-1\}$ 
and an arm from the set $\{j+1, j+2, \ldots, l-1\}$.  
(It might be the aforementioned arc between $i$ and $k$
or it might be an earlier-drawn arc.)
We know this occurs before the arc between $j$ and $l$ is drawn, 
and that every arc drawn earlier preserves the fact that 
such an edge would separate $l$ and $j$ into distinct subgames.  
Thus after it is drawn, there is no way for an arc to connect $l$ and $j$, contradiction.
Thus the graph $G$ cannot have any crossing edges and is a noncrossing tree.

Now we show that any two distinct endstates must correspond 
to two different noncrossing trees.  Take a noncrossing tree $T$; 
we will show that it uniquely determines an end-state $A$.  
First, we claim that the primary edges of $T$ 
correspond precisely to the primary arcs of $A$. 
Indeed, take a primary edge connecting vertices $i$ and $j$ with $i<j$ in $T$ 
and assume that it is not a primary arc in $A$.  
This implies that either $i$ or $j$ must have been 
connected to a different arm before being connected to $j$.  
Without loss of generality, say this is true of arm $i$.  
Since the edge between $i$ and $j$ is primary in $T$, 
we know that $i$ could not have been connected 
to any vertex in the set $\{j+1, j+2, \ldots, n, 1, \ldots, i-1\}$.  
Thus it must have been connected to a vertex in the set $\{i+1, \ldots, j-1\}$, 
but this would ensure that it would be placed in a different subgame from arm $j$.  
Then there could have been no arc joining $i$ and $j$ in $A$, contradiction.  
Thus every primary edge in $T$ corresponds to a primary edge in $A$. 
Now take a primary arc of $A$ connecting arms $i$ and $j$.  
Then there must be an edge connecting vertices $i$ and $j$ in $T$.  
Because the arc is primary in $A$, this implies that 
there can be no edges between $i$ 
and the vertices from the set $\{j+1, \ldots, n, 1, \ldots, i-1\}$ 
and no edges between $j$ 
and the vertices from the set $\{i+1, \ldots, j-1\}$.  
Thus, the edge between $i$ and $j$ in $T$ must be a primary edge.  
Thus $A$ has the same primary arcs as the primary edges of $T$.  
Then apply the same argument to each of the subgames of $A$ 
which its primary arcs divide it into.  
Continuing in this fashion, we see that 
the ancestry of each arc in $A$ is uniquely determined by $T$, 
so no two distinct endstates can correspond to the same noncrossing tree.

Finally, we show that any noncrossing tree $T$ can be realized 
from some possible endstate of the game.  We do this by induction.  
The result is easily verified  for the cases of $n=1$, $2$, and $3$.  
Now our goal is to find a primary edge in $T$; 
then by simply drawing the corresponding arc 
and applying the inductive hypothesis 
to the two subgames that the arc creates, 
we obtain the desired result. 
Now take any vertex on the graph, and apply the following algorithm 
until a primary edge is reached.  
Moving counterclockwise from the vertex around the graph, 
take the first vertex it is connected to by an edge.  
If the edge between them is a primary one, stop.  
Otherwise, take that new vertex and repeat the algorithm on it.  
We claim that this algorithm will terminate and return a primary edge.  
Indeed, for each vertex $i$, let $f(i)$ be the first of its neighbors 
in the counterclockwise ordering. 
Since the vertex set is finite, the sequence $i$, $f(i)$, $f(f(i))$, \dots 
must eventually repeat. 
The repeating pattern gives a cycle that contradicts acyclicity of the spanning tree 
unless the repeating pattern repeats with period 2. 
But if $j=f(i)$ and $i=f(j)$, then $\{i,j\}$ is a primary edge.
Thus every noncrossing tree must have a primary edge, finishing this proof.
\end{proof}

Let $a_n$ be the number of endstates of a game that begins with $n$ arms.
Then by Theorem~\ref{NCT}, $a_n$ is the number 
of noncrossing trees on $n$ vertices. The first few values of $a_n$ are:
\[a_1 = 1, a_2 = 1, a_3 = 3, a_4 = 12, a_5 = 55, a_6 = 273.\]

This is \seqnum{A001764} in the OEIS.

\begin{theorem}
The number of endstates of a game of Planted Brussels Sprouts
whose initial state has $n$ arms is
\[a_n = \frac{\binom{3n-3}{n-1}}{2n-1}.\]
\end{theorem}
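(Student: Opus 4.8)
The plan is to combine Theorem~\ref{NCT} with a short generating-function computation. By Theorem~\ref{NCT}, $a_n$ equals the number of topologically distinct noncrossing trees on $n$ labeled vertices, so it suffices to enumerate these. Since a chord diagram is noncrossing if and only if the corresponding arc diagram is noncrossing (for any fixed cyclic order of the vertices), I would first observe that we may equally well place the $n$ vertices on a line $1 < 2 < \cdots < n$ and count noncrossing trees drawn as arcs on one side of the line: this is literally the same set of objects, but it singles out the leftmost vertex as a natural root.

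Next I would set up a recursive decomposition by deleting the root. Rooting each tree at its leftmost vertex, the edges from this vertex to its neighbors cut the remaining vertices into consecutive blocks, each carrying a subtree whose root is the corresponding neighbor. The essential subtlety is that such a neighbor need not be leftmost in its block, so I introduce an auxiliary class $\mathcal{M}$ of noncrossing trees rooted at a vertex that is reached by its parent edge from the left; the root of such a tree may have neighbors on both sides, and those on the right (resp.\ left) are themselves roots of $\mathcal{M}$-trees (resp.\ mirror images of $\mathcal{M}$-trees, hence again counted by $\mathcal{M}$). Writing $M(z)$ for the generating function of $\mathcal{M}$ and $L(z)=\sum_{n\ge 1} a_n z^n$ for the rooted trees, the decomposition yields the system
\[ M = \frac{z}{(1-M)^2}, \qquad L = \frac{z}{1-M}, \]
which one checks reproduces $a_1,\dots,a_4 = 1,1,3,12$.

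From this system I would eliminate $M$ (using $1-M = z/L$) to obtain a single algebraic equation for $L$, namely $L^3 = zL - z^2$; equivalently $L(z) = z\,y(z)$, where $y = 1 + z y^3$ is the generating function for ternary trees. Finally I would apply Lagrange inversion. Setting $C = y-1$, so that $C = z(1+C)^3$, Lagrange inversion gives $[z^m]C = \tfrac{1}{m}[t^{m-1}](1+t)^{3m} = \tfrac{1}{m}\binom{3m}{m-1}$, whence $a_n = [z^n]L = [z^{n-1}]y = \tfrac{1}{n-1}\binom{3n-3}{n-2}$ for $n\ge 2$, which the standard Fuss--Catalan identity $\tfrac{1}{m}\binom{3m}{m-1} = \tfrac{1}{2m+1}\binom{3m}{m}$ rewrites as $\binom{3n-3}{n-1}/(2n-1)$.

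The main obstacle is the second step: rigorously justifying the recursive decomposition, i.e.\ verifying that deleting the root splits a noncrossing tree into subtrees occupying disjoint consecutive blocks, and that the bilateral ``accessed-from-one-side'' subtrees are counted exactly by the class $\mathcal{M}$ with neither over- nor under-counting. This is where the noncrossing hypothesis does all the work, and it closely mirrors the subgame-splitting phenomenon already established in the proof of Theorem~\ref{NCT}. Once the functional equation is pinned down, the elimination of $M$ and the Lagrange inversion are entirely routine.
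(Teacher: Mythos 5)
Your proposal is correct, but it takes a genuinely different route from the paper. The paper's entire proof is a citation: it invokes the known facts that noncrossing trees on $n$ vertices are equinumerous with ternary trees on $n-1$ vertices and that the latter are counted by $\binom{3n-3}{n-1}/(2n-1)$, outsourcing all the work to the reference. You instead give a self-contained derivation in the style of Flajolet and Noy's butterfly decomposition: root at the leftmost vertex, introduce the auxiliary class $\mathcal{M}$ of subtrees entered from one side, derive the system $M=z/(1-M)^2$, $L=z/(1-M)$, eliminate to get $L^3=zL-z^2$, and finish with Lagrange inversion. I checked the details: the class $\mathcal{M}$ is exactly the class of rooted noncrossing trees in which no edge arches over the root (e.g.\ $m_1,m_2,m_3=1,2,7$, consistent with your equations), the elimination and the substitution $L=zy$ with $y=1+zy^3$ are correct, and the Lagrange inversion and Fuss--Catalan identity give the stated formula. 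What your approach buys is a proof from first principles that also makes the connection to ternary trees transparent (the equation $y=1+zy^3$ appears explicitly rather than being quoted); what it costs is the verification you correctly flag as the main obstacle, namely that deleting the root of a noncrossing tree splits it into subtrees on disjoint consecutive blocks each satisfying the one-sided access condition, with no over- or under-counting. That verification is routine but must actually be written out (it parallels the subgame-splitting argument in the proof of Theorem~\ref{NCT}); once it is, your argument is a complete and arguably more informative proof than the paper's appeal to the literature.
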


\begin{proof}
The number of noncrossing trees on $n$ vertices is known to be equal to 
the number of ternary trees on $n-1$ vertices, which is known to be equal to 
$\frac{\binom{3n-3}{n-1}}{2n-1}$ by~\cite{Roy}.
\end{proof}

\section{Number of lines of play}
Let $b_n$ be the number of distinct (complete) playing sequences 
for a game of Planted Brussels Sprouts that begins with $n$ arms.
Two playing sequences are considered different if
for some $i$, the vertices connected at the $i$th stage in the first sequence
are not the vertices connected at the $i$th stage in the second sequence.

\begin{theorem}
The number of lines of play of a game of Planted Brussels Sprouts
whose initial state has $n$ arms is
\[b_n = n^{n-2}.\]
\end{theorem}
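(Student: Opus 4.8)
The plan is to derive a recurrence for $b_n$ directly from the subgame decomposition already exploited in Section~2, and then to solve that recurrence by recognizing its generating function as the tree function. Write $b_n$ for the number of lines of play of a game with $n$ arms, so that $b_1=1$. The first move of any game joins two of the $n$ original arms; since these carry the distinct short labels $1,\dots,n$, the first move is recorded by an unordered pair $\{i,j\}$, and all $\binom n2$ pairs occur. If $i<j$ and the clockwise gap is $m=j-i$, then exactly as in Section~2 the first arc splits the disk into a subgame of size $m$ and a subgame of size $n-m$, each again a Planted Brussels Sprouts game. I would therefore claim
\begin{equation}\label{eq:rec}
b_n \;=\; \sum_{m=1}^{n-1}(n-m)\binom{n-2}{m-1}\,b_m\,b_{n-m},
\end{equation}
and sanity-check it against the values $b_1,\dots,b_4=1,1,3,16$.

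To justify \eqref{eq:rec} I would argue that a completed line of play is equivalent to the data of: a first move of gap $m$, of which there are exactly $n-m$ (the pairs $\{i,i+m\}$ with $1\le i\le n-m$); a line of play of each of the two resulting subgames; and an interleaving of the $m-1$ moves of the first subgame with the $n-m-1$ moves of the second. Because the two subgames occupy disjoint sub-disks, a move available in one is available regardless of progress in the other, so every one of the $\binom{n-2}{m-1}$ interleavings yields a legal playing sequence. Conversely, the short-label sets of the two subgames partition $\{1,\dots,n\}$ into sets of sizes $m$ and $n-m$, so the sequence of edge labels of a completed game determines uniquely which moves belong to which subgame; hence the correspondence is a bijection and no line of play is double-counted. (Summing the multiplicities $\sum_{m}(n-m)=\binom n2$ recovers the total number of first moves, a useful consistency check.)

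To solve \eqref{eq:rec} I would normalize by setting $c_n=b_n/(n-1)!$, which collapses the recurrence to $(n-1)c_n=\sum_{m=1}^{n-1}(n-m)c_mc_{n-m}$; symmetrizing the right-hand side under $m\leftrightarrow n-m$ gives $(n-1)c_n=\tfrac n2\sum_{m=1}^{n-1}c_mc_{n-m}$. Passing to the ordinary generating function $C(x)=\sum_{n\ge1}c_nx^n$, this becomes $C^2=2C-2\widetilde C$, where $\widetilde C(x)=\sum_{n\ge1}(c_n/n)x^n$ satisfies $\widetilde{C}'(x)=C(x)/x$; differentiating yields the separable equation $C'(x)\,(1-C(x))=C(x)/x$ with $C(x)=x+O(x^2)$. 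This is exactly the equation obeyed by the tree function $T(x)=\sum_{n\ge1}\frac{n^{n-1}}{n!}x^n$, the exponential generating function of rooted labeled trees characterized by $T=xe^{T}$, so by uniqueness of the power-series solution $C=T$. Hence $c_n=n^{n-1}/n!$ and $b_n=(n-1)!\,c_n=n^{n-2}$. Equivalently, after normalization the claim is the classical Abel identity $\sum_{m=1}^{n-1}\binom nm m^{m-1}(n-m)^{n-m-1}=2(n-1)n^{n-2}$, which one could simply cite.

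The main obstacle, I expect, is not the algebra of solving \eqref{eq:rec} but the careful verification that \eqref{eq:rec} is a genuine bijection. One must confirm that the two subgames created by the first move are bona fide Planted Brussels Sprouts games whose short-label sets are disjoint (so that distinct interleavings and distinct subgame histories cannot collide) and that every interleaving is legal. The disjoint-subgame structure rests on the observation proved at the start of the proof of Theorem~\ref{NCT}, but making the bookkeeping of inherited short labels precise—in particular tracking which notch arm (labeled $i$ or $j$) lands in which subgame, and confirming that this always produces a partition of $\{1,\dots,n\}$ of sizes $m$ and $n-m$—is the delicate point on which the whole count hinges.
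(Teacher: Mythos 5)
Your proposal is correct, and its first half is essentially the paper's own argument in different packaging: you decompose a line of play into (first move) $\times$ (interleaving) $\times$ (two subgame histories), exactly as in the paper's proof by recursion. Your recurrence $b_n=\sum_{m=1}^{n-1}(n-m)\binom{n-2}{m-1}b_mb_{n-m}$ is the paper's Equation~\ref{rec} after symmetrizing the summand in $m\leftrightarrow n-m$ (you note this yourself); your derivation is arguably cleaner, since counting the $n-m$ pairs $\{i,i+m\}$ with $i<i+m\le n$ directly avoids the paper's slightly awkward ``undercounts by a factor of $n$, overcounts by a factor of $2$'' bookkeeping, and your check $\sum_m(n-m)=\binom n2$ confirms nothing is missed or double-counted. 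Where you genuinely diverge is in solving the recurrence. The paper (following Postnikov) shows combinatorially that $T_n=n^{n-2}$ satisfies the same recurrence, by observing that exactly a fraction $2/n$ of labeled trees on $n$ vertices contain the edge $\{1,2\}$ and that deleting that edge splits such a tree into two smaller labeled trees. You instead pass to $c_n=b_n/(n-1)!$, derive the differential equation $C'(1-C)=C/x$ for the ordinary generating function, and identify $C$ with the tree function $T=xe^T$ (equivalently, cite the Abel/Cayley identity $\sum_m\binom nm m^{m-1}(n-m)^{n-m-1}=2(n-1)n^{n-2}$, which I have checked is exactly the normalized form of the needed identity). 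Both routes are valid; the paper's buys a bijective-flavored explanation of \emph{why} labeled trees appear (consonant with the parking-function and cycle-factorization bijections that follow), while yours is more mechanical and self-contained once the recurrence is in hand. The ``delicate point'' you flag --- that the two subgames are disjoint bona fide games whose arm labels partition $\{1,\dots,n\}$ into sets of sizes $m$ and $n-m$ --- is established by the observation at the start of the paper's proof of Theorem~\ref{NCT}, and the paper's own recursion proof leans on it just as implicitly as yours does, so I do not regard it as a gap.
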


\subsection{Proof by recursion}
Say the first move on a game beginning with $n$ arms is of length $i$; 
that is, it separates the game into two subgames of sizes $i$ and $n-i$, 
for some $1\le i\le n-1$.
Then of the remaining $n-2$ moves, 
$i-1$ must be played on the subgame of size $i$, 
and the remaining $n-i-1$ moves are played on the other subgame.  
Thus there are $\binom{n-2}{i-1}$ ways to choose 
which moves will be played on which subgame.  
By definition there are $b_i$ and $b_{n-i}$ ways to play the two games, 
giving a total of $\binom{n-2}{i-1}b_ib_{n-i}$ ways 
to move after the first move.  Now given any fixed $i$, 
note that connecting vertices $a$ and $a+i$ (taken mod $n$) 
as $a$ ranges from $1$ through $n$ are the ways 
to make a first move of length $i$.  
The associated sum $\sum_{i=1}^{n-1}\binom{n-2}{i-1}b_ib_{n-i}$
undercounts by a factor of $n$,
corresponding to the $n$ different choices of an initial arm,
but also overcounts by a factor of 2,
since either of the two ends of the first arc
could be treated as the initial arm
(moves of length $n-i$ are equivalent to moves of length $i$).  
Thus the sum needs to be multiplied by $n/2$, giving the recursion
\begin{equation}
\label{rec}
b_n=\dfrac{n}{2}\sum_{i=1}^{n-1}\dbinom{n-2}{i-1} b_i b_{n-i}.
\end{equation}

It is well-known that the number of trees on $n$ unlabeled vertices, $T_n$, 
is $n^{n-2}$.  It is easy to check that 
the first few values of $b_n$ correspond with this as well.  
Thus, if we show that $T_n$ follows the same recursion as that in Equation~\ref{rec}, 
the theorem is proven. \\

Consider all spanning trees on vertices $1, 2, \ldots, n$, 
and note that proportionately exactly 
$\frac{n-1}{\binom{n}{2}}=\frac{2}{n}$ of them 
have an edge between vertices $1$ and $2$.  
Indeed, each spanning tree has $n-1$ edges 
out of $\binom{n}{2}$ total possible edges, 
and when all these trees are considered, 
no edge occurs more than any other.  
Now we count the number of spanning trees 
where vertices $1$ and $2$ are connected by an edge.  
Note that removing this edge results in two subtrees.  
If there are $i-1$ other vertices in the subtree with vertex $1$, 
then there are $n-i-1$ vertices in the subtree with the vertex $2$.  
Since $i$ can range from $1$ through $n-1$ 
and any $i-1$ of the remaining $n-2$ vertices 
can be chosen to be in the first subtree, 
we obtain the recursion 
\begin{equation}
T_n=\dfrac{n}{2}\sum_{i=1}^{n-1}\dbinom{n-2}{i-1}T_iT_{n-i}.
\end{equation}
which is the same recursion as that in Equation~\ref{rec}.  
Thus the number of ways to play Planted Brussels Sprouts 
is the same as the number of spanning trees 
on $n$ distinguishable vertices, as desired. \\

\subsection{Bijection to parking functions}

It is known that the number of parking functions of length $n$ 
is $(n+1)^{n-1}$~\cite{KW}.  
We will now exhibit a bijection between the playing sequences of length $n$ 
and the parking functions of length $n-1$.  
This bijection between maximal chains of noncrossing partitions and parking functions
is due to Stanley~\cite{Stanley};
here, we restate it in terms of playing sequences 
of Planted Brussels Sprouts and prove it. \\

First, recall that a parking sequence of length $n-1$ 
is an ordered $n-1$-tuple $(a_1, a_2, \ldots, a_{n-1})$ of integers 
between $1$ and $n-1$, inclusive, satisfying the property that 
if the $a_i$'s are arranged in increasing order $a'_1\le a'_2\le \cdots\le a'_{n-1}$, 
then $a'_i\le i$ for all $1\le i\le n-1$. \\

Using the same clockwise shift of numbering as used in Section~\ref{end_states}, 
we can interpret each move of Planted Brussels Sprouts 
as dividing the game into two subgames that partition the set of vertices.  
Thus, a move is essentially equivalent to separating 
a contiguous set of arms from the rest of the arms.  
For each of the $n-1$ moves, we obtain a number in the following way.  
Say that a move connects arms $i$ and $j$.  
Now in the subgame in which this move was made, 
consider the labels of the arms immediately counterclockwise 
to arm $i$ and to arm $j$.  Take the lesser one of these labels.

\begin{theorem}
The sequences of $n-1$ numbers obtained in this way 
are distinct for distinct playing sequences, 
and are the parking functions of length $n-1$.
\end{theorem}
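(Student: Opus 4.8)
The plan is to show that the map $\Phi$ sending a playing sequence to its sequence of $n-1$ numbers lands inside the set of parking functions of length $n-1$ and is injective. Since the previous theorem gives $b_n=n^{n-2}$ playing sequences while the cited count of parking functions gives $((n-1)+1)^{(n-1)-1}=n^{n-2}$ functions of length $n-1$, an injection into a set of equal size is automatically a bijection, which is precisely the assertion that the numbers are distinct for distinct plays and range over all parking functions. Before anything else I would record the local rule for the assigned number: if a move joins arms $i<j$ inside a block (a cyclic interval of short labels) with minimum $c$ and maximum $d$, then the immediate counterclockwise neighbors of $i$ and $j$ carry labels $i-1$ and $j-1$, with the neighbor of the block-minimum wrapping around to $d$; hence the assigned number equals $i-1$ when $i>c$ and equals $j-1$ when $i=c$, and in every case it lies in $\{c,\dots,d-1\}$. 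Two structural facts follow and will be used repeatedly: the blocks present at any stage are cyclic intervals partitioning $\{1,\dots,n\}$, each move splits one block into two, and the number assigned to a move lies in its own block's range $\{c_B,\dots,d_B-1\}$, so these ranges are pairwise disjoint.

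To prove that every output is a parking function I would induct on the block size, the claim being that the $s-1$ numbers produced inside a block of size $s$, after subtracting $c-1$, form a parking function of length $s-1$. The first move inside a block $B$ splits it into a genuine interval $L$ and a possibly wrapped sub-block $R$; the essential device is the unique relabeling $\phi$ of $R$ onto a standard interval $\{1,\dots,q\}$ obtained by rotating the cyclic order to start at $\min R$ and then mapping order-preservingly. Because $\phi$ preserves both the cyclic order and the numeric order, the counterclockwise-neighbor-and-lesser-label rule is covariant under $\phi$, so the inductive hypothesis applies to $R$. One then verifies the threshold inequality $\#\{\text{numbers}\le k\}\ge k$ for the combined data, separating Case~A ($i=c$, a clean split into two honest intervals) from Case~B ($i>c$, where $R$ wraps). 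The one numerical input that makes Case~B go through is that the wrapped block satisfies $q\ge r+1$ with $r=i-c$ (equivalently $j\le d$), which holds by the geometry of the split; granting this, each threshold inequality reduces to the corresponding inequality for the two sub-parking-functions. This induction is the main computational content.

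The harder part is injectivity, which I would attack by reconstructing the entire playing sequence from its output $P=(\ell_1,\dots,\ell_{n-1})$ by induction on $n$: recover the first move, split the remaining entries into the interleaved subsequences belonging to $L$ and to $R$, and recurse on each. The obstacle is that $\ell_1$ does not determine the first move on its own: it forces one endpoint to be $\ell_1+1$ and forces $\ell_1$ and $\ell_1+1$ into different sub-blocks, but it leaves the other endpoint $j$—and even the choice between Case~A and Case~B—undetermined, so the decoding cannot be carried out greedily from left to right. The lever I would use is that, by the Case~B analysis, the value $j-1$ is produced by no move of the entire game and is exactly the value separating the range of $L$ from the upper range of $R$; the difficulty is that the recursive sub-games create their own absent values, so recovering the genuine top-level $j$ requires separating this structural gap from the incidental ones, using the disjointness of the sub-block ranges. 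Making this recovery rigorous—proving that the first move, and hence the induced splitting of $P$ into two sub-parking-functions, is forced by $P$ alone—is what I expect to be the crux of the proof.

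Once the map is shown to land in parking functions of length $n-1$ and to be injective, the equality $n^{n-2}=n^{n-2}$ forces it to be a bijection onto all such parking functions, yielding both claims of the theorem simultaneously. As a fallback for the injectivity step, I could instead define an explicit inverse $\Psi$ on parking functions that makes a canonical decoding choice at each stage and verify $\Phi\circ\Psi=\mathrm{id}$; surjectivity of $\Phi$ would then follow, and the equal cardinalities would again upgrade this to a bijection, so that either injectivity or surjectivity suffices to finish.
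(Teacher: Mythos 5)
Your framing is sound, and your first half (showing the image lands in parking functions by inducting on blocks, with an order-preserving relabeling of wrapped sub-blocks) is a workable alternative to the paper's argument, which instead negates the parking condition into ``at least $k+1$ of the entries are too large'' and inducts on $k$ using the fact that a move generating a large entry must split the largest arms among several subgames. But there is a genuine gap exactly where you say you expect the crux to lie: you never actually recover the first move from the parking function. Your proposed lever --- that $j-1$ is a ``structurally absent'' value separating the ranges of the two sub-blocks --- is not developed into a criterion that distinguishes this top-level gap from the absent values created recursively inside the sub-blocks (a parking function such as $(1,1,\dots,1)$ has many absent values), and you explicitly defer making this rigorous. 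Since injectivity, or equivalently a well-defined inverse, is the entire content of the theorem's first claim and the only route to surjectivity, the proof is incomplete as it stands.

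For comparison, the paper resolves this step without looking at absent values at all: it characterizes $j$ as the first index, moving clockwise from $a_1+2$, at which the entries of $(a_2,\dots,a_{n-1})$ lying in $\{a_1+1,\dots,j-1\}$ number exactly $j-a_1-2$ and themselves form a parking function for that sub-block. Existence is a first-passage (ballot-type) argument: the deficit function $f(j)$ starts at a nonnegative value, ends at a value at most $0$, and decreases by at most $1$ per step, so it first hits $0$ somewhere, and minimality of that $j$ supplies the parking condition; uniqueness is a short counting contradiction (two valid cut points $j<j'$ would force $j'-j$ entries into $\{j,\dots,j'-2\}$, which is too many). This proves existence and uniqueness of the preimage simultaneously, so the paper obtains surjectivity directly rather than through the cardinality identity $n^{n-2}=n^{n-2}$. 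Your cardinality shortcut is legitimate in context, since the recursion proof of the preceding theorem is independent, but it cannot substitute for the missing decoding argument.
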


\begin{proof}
First we will show that the sequences 
generated in this way are parking functions.  
Indeed, assume the contrary.  
This implies that for some $k$, at least $k+1$ of the numbers are at least $n-k+1$.  
We proceed by induction on $k$.  
The result is easy to verify for the base case of $k=0$.  
Now for the inductive step, note that for a move to generate such a number, 
it must connect to an arm with label at least $n-k+1$.  
Consider the first such move.  
At this point, at most one of the generated numbers is at least $n-k$.  
Furthermore, after this move, these largest $k$ arms 
are split into $t$ separate subgames with $t\ge 2$.  
Because they must be the largest arms in their respective subgames, 
by the inductive hypothesis, at most $1+k+1-t\le k$ of the numbers 
can be one of those $k+1$ largest numbers, contradiction.     \\

Now we will show that each parking function $(a_1, a_2, \ldots, a_{n-1})$ 
can be obtained through exactly one playing sequence, 
which will complete the proof.  We do this by induction.  
For the base cases of $n=2$ and $n=3$, the result is easily checked.  
Now assuming the result for labels $2$, $3$, $\ldots$, $n-1$, 
we prove the result for $n$.   
Since the first term of the sequence is $a_1$, 
say that the first move connects arms $a_1+1$ and $j$ with $j > a_1+1$ or $j=1$.  
This divides the game into a subgame containing 
the vertices $a_1+1$, $a_1+2$, $\ldots$, $j-1$ and another subgame containing 
$j$, $j+1$, $\ldots$, $n$, $1$, $2$, $\ldots$, $a_1$.
Now consider the remaining terms of the sequence: 
$a_2$, $a_3$, $\ldots$, $a_{n-1}$.  
The terms of this subsequence which correspond 
to arms in the first subgame must form a parking sequence for the first subgame, 
and the rest must form a parking sequence for the second subgame.  
This is because they must be played in their respective subgames, 
and each playing sequence of a game must give rise to a parking sequence. 
Thus we must show that such a $j$ strictly 
between $a_1-1$ and $n$ exists and is unique. \\

Let $j$ move clockwise from $a_1+2$ through $n$ and $1$ 
until the following condition is satisfied: 
there are precisely $j-a_1-2$ terms 
of the sequence $a_2$, $a_3$, $\ldots$, $a_{n-1}$ 
which are equal to one of the numbers $a_1+1$, $a_1+2$, $\ldots$, $j-1$, 
and if these terms are arranged in increasing order, 
the $k^\text{th}$ term is at most $a_1+k$.  
In other words, the remaining terms of the sequence 
which are within this subgame form a parking sequence for this subgame.  
Indeed, as $j$ ranges, this condition must occur at some point.  
If $a_1+1$ is not a term in this sequence, then setting $j=a_1+2$ works.  
Otherwise, there is at least $1$ term of $a_1+1$ in the sequence.  
Now as $j$ ranges from $a_1+2$ through $1$, 
define $f(j)$ to be $j-a_1-2$ subtracted from the number of terms 
between $a_1+1$ and $j-1$, inclusive (when $j=1$, consider it equal to $n+1$).  
Thus, $f(a_1+2)$ is equal to the number of terms 
equal to $a_1+1$ in the sequence $a_2$, $a_3$, $\ldots$, $a_{n-1}$.  
Because the original sequence is a parking sequence, 
there are at most $n-a_1-1$ terms at least $a_1+1$.  
Thus, $f(1) \le n-a_1-1 - (n-1-a_1) = 0$.  
Note that $f(j)$ decreases by at most $1$ on each step.  
Now take the first $j$ as $j$ moves clockwise from $a_1+2$ through $1$ 
such that $f(j) = 0$.  
Because there are the right number of terms for a parking sequence 
and because of the minimality of $j$, this choice of $j$ works as desired. \\

Now we show that when this condition is satisfied, 
the remaining terms which fall in the other subgame 
form a parking sequence on that subgame as well.  
Essentially, certain terms which form a parking sequence 
have been taken away from the parking sequence $a_1$, $a_2$, $\ldots$, $a_{n-1}$, 
and we must show that the result is a parking sequence on the remaining terms: 
$1$, $2$, $\ldots$, $a_1$, $j$, $j+1$, $\ldots$, $n$.  
This follows from the nature of parking sequences.  
Indeed, when these are arranged in increasing order, 
for any $i$ with $1\le i\le a_1$, the $i^\text{th}$ term 
is at most $i$ because the original sequence is a parking sequence.  
Next, for any $i$ with $1\le i\le n-j$, 
no more than $i$ of them can be at least $n-i$, for the same reason.  
Thus when the terms form a parking sequence on the first subgame, 
the remaining terms must form a parking sequence on the second subgame. \\

Now it remains to show that such a choice of $j$ is unique.  
Indeed, suppose that there are two choices $j$, $j'$ with $j< j'$.  
Note that because the terms form a parking sequence 
on the arms $a_1+1$, $a_1+2$, $\ldots$, $j-1$, 
then $j-1$ cannot be one of the terms.  
But if the terms also form a parking sequence 
on $a_1+1$, $a_1+2$, $\ldots$, $j'-1$, 
then there are $j'-a_1$ terms in that range, 
but only $j-a_1$ terms in $a_1-1$, $a_1$, $\ldots$, $j-1$.  
This means that there are $j'-j$ terms 
in the range $j$, $j+1$, $\ldots$, $j'-2$, contradiction.

\end{proof}

\subsection{Bijection to factorizations of the cycle}
The $n$-cycle denoted by $(1,2,\cdots,n)$ (in cycle notation)
can be expressed as a product of $n-1$ (but no fewer) transpositions,
and it has long been known~\cite{D} that the number of such expressions 
is equal to $n^{n-2}$.

Recall that in our previous bijective proof, 
each arc gave rise to two labels, and by picking the smaller one, 
we derived a parking function.  
If instead we take those same two labels to be a transposition, 
we claim that the sequences of transpositions that can be obtained this way 
form the $n^{n-2}$ factorizations of the cycle $(1,2,\cdots,n)$.

For example, in the case $n=3$,
there are three lines of play:
the first has arc $\{1,2\}$ followed by arc $\{2,3\}$;
the second has arc $\{1,3\}$ followed by arc $\{1,2\}$;
and the third has arc $\{2,3\}$ followed by arc $\{1,3\}$.
These three lines of play correspond to the factorizations
(12)(23), (13)(12), and (23)(13), respectively,
where the successive inserted arcs
correspond to the transpositions read from left to right
(even though as usual we imagine the permutations
being composed from right to left).
As we will see later, for purposes of the inductive proof
it will be more convenient to increment shift the indices by 1 mod $n$,
so that these three lines of play correspond instead to
the factorizations (23)(13), (12)(23), and (13)(12), respectively.)

\begin{theorem}
For each arc in a PBSG connecting labels $i$ and $j$, 
let $i'$ and $j'$ be the labels immediately counterclockwise from $i$ and $j$ 
in the subgames the arc divides them into, respectively.  
Then consider the sequences of $n-1$ transpositions defined 
by transposing the labels $i'$ and $j'$ obtained from each arc of a PBSG in this way.
Then these sequences of transpositions obtained from the distinct PBSGs, 
applied to the identity permutation in the order they 
are obtained, are the $n^{n-2}$ factorizations 
of the cycle $(1,2,\cdots,n)$ into $n-1$ transpositions.

\begin{proof}
To demonstrate this bijection, we prove three things: 
first, each sequence of transpositions obtained in this way 
is a factorization of $(1, 2, \ldots, n)$; 
second, distinct lines of play correspond to different sequences of transpositions; 
and third, each of the $n^{n-2}$ factorizations can be achieved in this way.

We prove the first claim through induction.  
For the base case $n=2$, the result is clear.  
Now assume the result holds for $n=1, 2, \cdots, k-1$ for some $k\ge 3$.  
We prove the result for $n=k$.  
Say the first arc connects arms $i$ and $j$ with $i<j$.  
This corresponds to the transposition $(i-1, j-1)$, and breaks the game into 
subgames with arms $i, i+1, \ldots, j-1$, and $j, j+1, \ldots, k, 1, \ldots, i-1$.  
Then by the inductive hypothesis, the remaining $n-2$ transpositions 
will act as a cycle on those two subgames.  
Since those cycles are disjoint, the final permutation obtained is 
$(i, i+1, \ldots, j-1)(j, j+1, \ldots, n, 1, \ldots, i-1)(i-1, j-1)=(1, 2, \ldots, n)$, 
as desired.

The second claim follows from the fact that each stage of the game, 
each transposition uniquely determines the arc necessary to obtain it.  
To prove this fact, we use induction.  That is, we induct on the assertion 
that there cannot be multiple distinct ways to produce the same sequence 
of the first $l$ transpositions of a sequence of $n-1$ transpositions, 
for any $1\le l\le n-1$.  Say we are given a sequence of transpositions, 
denoted $t_1, t_2, \ldots, t_{n-1}$, where $t_k$ 
transposes $i_k$ and $j_k$ for $1\le k\le n-1$.  
Then for $t_1$ to be obtained, the first move must connect $i_1+1$ and $j_1+1$, 
so the result holds for the base case.  
Now assume that there is a unique set of $l$ moves to obtain 
the transpositions $t_1, t_2, \ldots, t_l$ for some $1\le l \le n-2$.  
Indeed, if there is no way to obtain those transpositions, 
then there trivially cannot be multiple ways 
to obtain the first $l+1$ transpositions.  
Then in order for $t_{l+1}$ to be obtained, 
both $i_{l+1}$ and $j_{l+1}$ need to be in the same subgame, 
and the only move will be to connect the labels positioned 
directly clockwise to them in that subgame.  
Thus there cannot be multiple ways to achieve $t_1, t_2, \ldots, t_{l+1}$, as desired.

To prove the third claim, 
first consider what defines an achievable sequence of transpositions from a PBSG.  
The only restriction is that each transposition act on two elements of the same subgame; 
in particular, after drawing an arc between $i+1$ and $j+1$, 
the remainder of the transpositions must act on elements 
of either $(i+1, i+2, \ldots, j)$ or $(j+1, j+2, \ldots, n, 1, \ldots, i)$.

Now take any sequence of transpositions $t_i$, $1\le i \le n-1$ 
such that $t_{n-1}t_{n-2}\cdots t_1=(1, 2, \ldots, n)$.  
Then we have $(1, 2, \ldots, n)t_1t_2\ldots t_{n-1} = e$.  
Note that as these transpositions are successively applied to $(1, 2, \ldots, n)$,  
the cycle type of the permutation goes from $1$ cycle to $n-1$ cycles.  
Furthermore, each individual transposition 
can only increase the number of cycles by $1$, 
and that occurs if it transposes elements from different cycles.  
Thus each of the transpositions must have this property.  
That is, each successive transposition $t_{k}$ must add one cycle 
to the number of cycles in the permutation 
defined by $(1, 2, \ldots, n)t_1\ldots t_{k-1}$.

Say $t_1 = (i, j)$.  Then we have 
$$(1, 2, \ldots, n)t_1=(1, 2, \ldots, n)(i,j)=
(j+1, j+2, \ldots, n, 1, \ldots i-1, i)(i+1, i+2, \ldots, j-1, j).$$
Then note that in order for $t_2$ to add to the number of cycles, 
it must transpose two arms of the same cycle, 
for if it doesn't, then it will merge the two cycles it connects.  
This is true at each stage; $t_k$ must act on two elements 
of the same cycle of $(1, 2, \ldots, n)t_1\ldots t_{k-1}$.  
But note that at each stage, the cycles correspond precisely 
to the subgames which drawing the appropriate arc divides the game into.  
Indeed,  when a permutation is applied to two elements of the same cycle, 
the resulting two cycles correspond to the same two subgames that 
drawing the corresponding arc divides the subgame it is drawn in into.  
Thus each of these sequences of transpositions is allowed, 
because they act within each subgame.  
Thus each of these sequences can be obtained through a PBSG, completing the proof.

\end{proof}
\end{theorem}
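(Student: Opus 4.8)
The plan is to prove that the stated assignment is a bijection by checking three properties of the map $\Phi$ that sends a line of play to its recorded sequence of transpositions: that $\Phi$ lands in the set of factorizations of $(1,2,\ldots,n)$ into $n-1$ transpositions, that $\Phi$ is injective, and that $\Phi$ is surjective. Since the number of lines of play is $n^{n-2}$ (proven above) and the number of such factorizations is also $n^{n-2}$ by~\cite{D}, once $\Phi$ is known to be well-defined it would formally suffice to establish either injectivity or surjectivity; nonetheless I would verify all three, since the surjectivity argument is the one that explains why the construction works.

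First I would prove well-definedness by induction on $n$, using the fact (established earlier) that the first arc splits the game into two subgames on complementary contiguous arcs of the circle. If the first arc joins arms $i$ and $j$ with $i<j$, the recorded transposition is $(i-1,j-1)$ and the arms split into the intervals $\{i,\ldots,j-1\}$ and $\{j,\ldots,n,1,\ldots,i-1\}$. Transpositions coming from different subgames act on disjoint label sets, so they commute past one another and may be grouped; by the induction hypothesis the two groups multiply to the disjoint cycles $(i,i+1,\ldots,j-1)$ and $(j,j+1,\ldots,n,1,\ldots,i-1)$. The identity
\[
(i,i+1,\ldots,j-1)(j,j+1,\ldots,n,1,\ldots,i-1)(i-1,j-1)=(1,2,\ldots,n)
\]
then closes the induction, and it is exactly the shift of labels by $1$ that makes this hold, which is why I adopt that convention.

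Next I would prove injectivity by induction on the length of the recorded prefix, the content being that at every stage a given transposition can be produced by at most one legal arc. If a partial play produces $t_1,\ldots,t_\ell$, then the two labels swapped by $t_{\ell+1}$ must lie in a common subgame, and the unique arc producing that swap joins the arms positioned directly clockwise from those two labels within that subgame. Hence the $(\ell+1)$st move is forced once the first $\ell$ are, so distinct lines of play yield distinct transposition sequences.

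The surjectivity argument is where I expect the real work to lie, and it rests on matching the cycle structure of the partial products with the subgame structure of the game. Writing $\pi_k=(1,2,\ldots,n)t_1\cdots t_k$, the product passes from one cycle at $k=0$ to $n$ cycles at $k=n-1$ (where $\pi_{n-1}=e$); since a transposition alters the cycle count by exactly one and there are $n-1$ steps, \emph{every} $t_k$ must split a cycle, i.e.\ transpose two entries lying in a common cycle of $\pi_{k-1}$. The lemma I would isolate and prove by induction is that each $\pi_k$ has all of its cycles equal to contiguous arcs of the circle, and that right-multiplying such an arc-cycle by a transposition of two of its entries yields the two complementary sub-arcs,
\[
(c_1\,c_2\cdots c_m)(c_p\,c_q)=(c_{p+1}\cdots c_q)(c_{q+1}\cdots c_m\,c_1\cdots c_p),\qquad p<q,
\]
which are precisely the two subgames that drawing the corresponding arc creates. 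Granting this lemma, an arbitrary factorization decodes move-by-move into a legal line of play, each splitting transposition matching a legal arc, so $\Phi$ is onto. The main obstacle is thus the arc-preservation lemma: showing that the cycle-splits never escape a single arc (equivalently, that ``$t_k$ acts inside one cycle'' is legal exactly when the corresponding move is), and that each smaller arc, after relabeling, behaves like a fresh Planted Brussels Sprouts game so that the induction can proceed cleanly.
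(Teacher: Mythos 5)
Your proposal follows essentially the same route as the paper's proof: the same three-part structure (well-definedness by induction on the first arc's split, injectivity by induction on the prefix of recorded transpositions, and surjectivity via the cycle-counting argument matching cycles of the partial products $(1,2,\ldots,n)t_1\cdots t_k$ to subgames). Your explicit isolation of the arc-cycle splitting identity and the observation that the cardinality count makes one of injectivity or surjectivity redundant are minor refinements of, not departures from, the paper's argument.
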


\section{Odds and ends}

\subsection{Variants}
A seemingly slightly different variant of Brussels Sprouts
game that yields the same two integer sequences A001764 and A000272 
starts from an $n$-pointed star on a sphere.
This is in fact the same game in disguise.
Indeed, we may replace the star by a small circle with $n$ arms protruding outward, 
without changing any aspects of the legal moves.
The circle partitions the surface of the sphere into two parts, 
one of which is the region which the arcs will be drawn on.  
Since this region is homeomorphic to the open disk, 
the enumerative properties of this variant 
are the same as those of the original game.

We might also consider an $n$-pointed star on the plane.
It is not hard to show that the number of endstates for this game 
is exactly $n$ times the number of endstates for the game
treated in the previous paragraph.
To see this, think of the plane as a punctured sphere.
When playing on a sphere, the drawn arcs 
always divide the sphere into $n$ regions at the end of the game.  
We may choose to place the puncture in any one of these regions, 
and then by performing a stereographic projection onto the plane, 
we recover $n$ distinct endstates for each endstate of the game on a sphere.
For the same reason,
the number of lines of play for the star-on-a-plane game
is exactly $n$ times the number of lines of play 
for the star-on-a-sphere game.

On the other hand, we might consider initial states
consisting of several stars (on a plane or a sphere),
as in the original version of Brussels Sprouts.
Now it might appear that there are infinitely many endstates,
and indeed infinitely many initial moves,
since an arc joining an arm of one star
to an arm of another star
has the freedom of winding around one star or the other;
if we imagine the stars as being fixed in place (on the plane or sphere),
then there is no homotopy that can undo the winding.
Hence it is unclear whether there are enumerative results to be obtained here
(though one might try to sweep the winding issue under the rug
by paying attention only to the combinatorics, and not the topology,
of the connection patterns of the arcs).

Finally, one might consider a ``type B'' version of Planted Brussels Sprouts
that maintains 180-degree rotational symmetry throughout the game.
This requires that there be two kinds of moves:
a single arc joining two points 180 degrees apart,
and a pair of matched arcs related by 180 degree rotation.

\subsection{Lines of play leading to a particular endstate}

\begin{figure}
\begin{center}
\includegraphics[width=.9\textwidth]{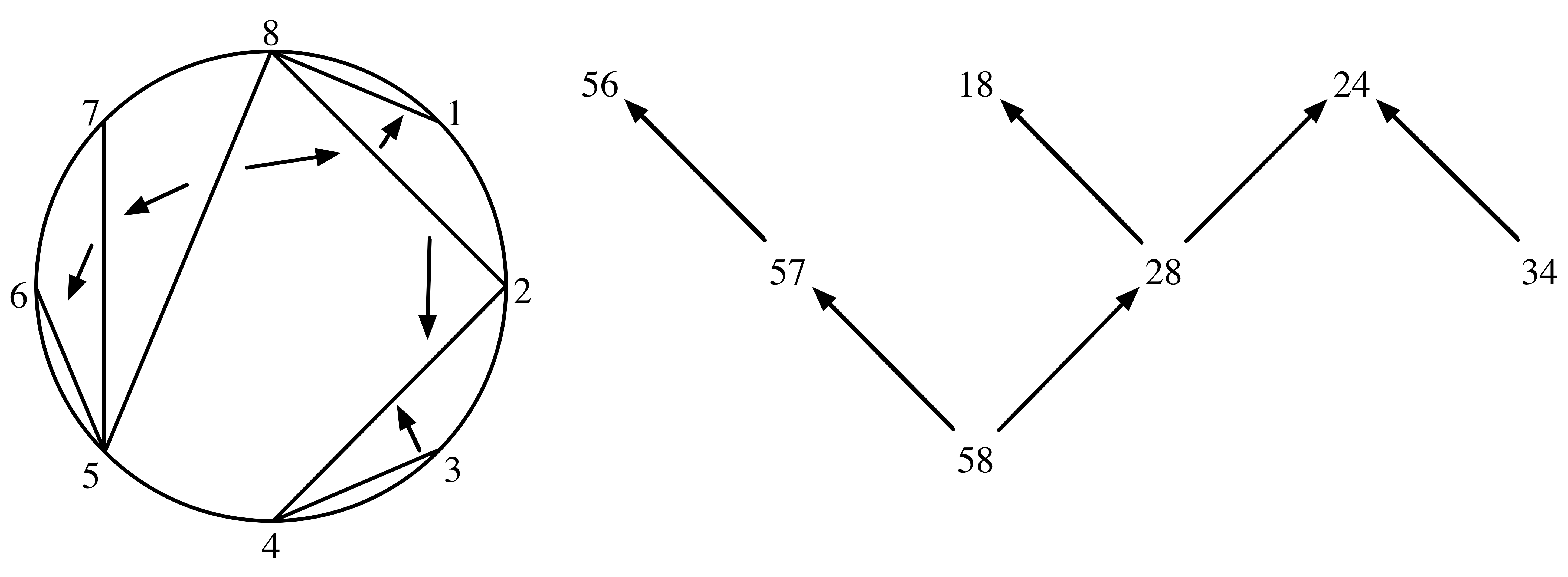}
\end{center}
\caption{The partially ordered set associated with a noncrossing spanning tree. 
Each covering relation $e$ $\coveredby$ $e'$ is indicated by 
an arrow $e \rightarrow e'$, 
indicating that edge $e$ must be played before edge $e'$.}
\end{figure}

It is not hard to show that, for each Planted Brussels Sprouts Endstate, 
the PBSGs having that particular PBSE as their endstate
correspond to the linear extensions of a particular poset associated with the PBSE.
We can most conveniently describe the poset using the NST picture.
Given a fixed NST $T$ with edge-set $E_T$,
and given two edges $e,e' \in E_T$,
write $e \rightarrow e'$ to mean that $e'$ can be obtained from $e$ 
by swinging $e$ counterclockwise around one of its two endpoints.
We have already shown that the digraph associated with this relation is acyclic; 
let $\leq_T$ be its reflexive-transitive closure.
Then we claim that the PBSGs compatible with $T$
are precisely the linear extensions of the poset $(E_T, \leq_T)$.
This is easily proved by induction making use of the primary edges,
which are the minimum elements of the poset.
See Figure~\figposet.

\bigskip

{\sc Acknowledgments}: This article grew out of a problem
originally presented by the second author
at a meeting of the Cambridge Combinatorics and Coffee Club.
CCCC was founded by Richard Stanley, 
and the meeting was hosted by the Worldwide Center of Mathematics.
Alex Postnikov devised the proof-by-recursion of Theorem 3.
Subsequently, Tanya Khovanova brought the problem
to the attention of the first author
(as part of the PRIMES program), who found a bijective proof of Theorem 3, 
and also conjectured and proved Theorem 2.
Nathan Williams found his own proof of Theorem 5.
We are grateful to the anonymous referee
who made many suggestions that improved this article.

\bibliography{JiPropp4}{}
\bibliographystyle{jis}

\end{document}